\documentclass[12pt]{amsart}
\usepackage{amssymb}
\addtolength{\oddsidemargin}{-.875in}
	\addtolength{\evensidemargin}{-.875in}
	\addtolength{\textwidth}{1.75in}

	\addtolength{\topmargin}{-.875in}
	\addtolength{\textheight}{1.75in}

\newtheorem{Def}{Definition}
\newtheorem{Lem}{Lemma}

\newtheorem{Thm}{Theorem}
\newtheorem{Cor}{Corollary}
\newtheorem{Rem}{Remark}
\newenvironment{Pf}{ Proof.}{\(\square\)}

\title[On compatible linear connections with totally anti-symmetric torsion tensor...]{On compatible linear connections with totally anti-symmetric torsion tensor of three-dimensional generalized Berwald manifolds}
\author{Cs. Vincze}
\address{Inst. of Math., Univ. of Debrecen \\
H-4010 Debrecen, P.O.Box 12 \\
Hungary}
\email{csvincze@science.unideb.hu}
\keywords{Finsler spaces, Generalized Berwalds spaces, Intrinsic Geometry}
\subjclass{53C60, 58B20}
\begin{document}
\begin{abstract}
Generalized Berwald manifolds are Finsler manifolds admitting linear connections such that the parallel transports preserve the Finslerian length of tangent vectors. By the fundamental result of the theory \cite{V5} such a linear connection must be metrical with respect to the averaged Riemannian metric given by integration of the Riemann-Finsler metric on the indicatrix hypersurfaces. Therefore the linear connection is uniquely determined by its torsion tensor. If the torsion is zero then we have a classical Berwald manifolds. Otherwise the torsion is a strange data we need to express in terms of quantities of the Finsler manifold. In the paper we are going to give explicit formulas for the linear connections with totally anti-symmetric torsion tensor of three-dimensional generalized Berwald manifolds. The results are based on averaging of (intrinsic) Finslerian quantities by integration over the indicatrix surfaces. They imply some consequences for the base manifold as a Riemannian space with respect to the averaged Riemannian metric. The possible cases are Riemannian spaces of constant zero curvature, constant positive curvature or Riemannian spaces admitting Killing vector fields of constant Riemannian length. 
\end{abstract}
\maketitle
\footnotetext[1]{Cs. Vincze is supported by the EFOP-3.6.1-16-2016-00022 project. The project is co-financed by the European Union and the European Social Fund.}
\section*{Introduction}

The notion of generalized Berwald manifolds goes back to V. Wagner \cite{Wag1}. They are Finsler manifolds admitting linear connections such that the parallel transports preserve the Finslerian length of tangent vectors (compatibility condition). The basic questions of the theory are the unicity of the compatible linear connection and its expression in terms of the canonical data of the Finsler manifold (intrinsic characterization). In case of a classical Berwald manifold admitting a compatible linear connection with zero torsion, the intrinsic characterization is the vanishing of the mixed curvature tensor of the canonical horizontal distribution. In general the intrinsic characterization of the compatible linear connection is based on the so-called \emph{averaged Riemannian metric}  given by integration of the Riemann-Finsler metric on the indicatrix hypersurfaces. By the fundamental result of the theory \cite{V5} such a linear connection must be metrical with respect to the averaged Riemannian metric. Therefore the linear connection is uniquely determined by its torsion tensor. Following Agricola-Friedrich \cite{AF} consider the decomposition
$$T(X,Y):=T_1(X,Y)+T_2(X,Y),\ \ \textrm{where}\ \ \displaystyle{T_1(X,Y):=T(X,Y)-\frac{1}{n-1}\big(\tilde{T}(X)Y-\tilde{T}(Y)X\big)},$$
$\tilde{T}$ is the trace tensor of the torsion and
\begin{equation}
\label{trace}
T_2(X,Y):=\frac{1}{n-1}\left(\tilde{T}(X)Y-\tilde{T}(Y)X\right).
\end{equation}
In case of 2D the torsion tensor is automatically of the form (\ref{trace}); see \cite{V13}. If the dimension is at least three then the trace-less part can be divided into two further components
$$T_1(X,Y)=A_1(X,Y)+S_1(X,Y)\ \ \Rightarrow\ \ T(X,Y)=A_1(X,Y)+S_1(X,Y)+T_2(X,Y)$$
by separating the totally anti-symmetric/axial part $A_1$. Therefore
we have eight possible classes of generalized Berwald manifolds depending on the surviving terms such as classical Berwald manifolds admitting torsion-free compatible linear connections \cite{Szab1} (we have no surviving terms) or Finsler manifolds admitting semi-symmetric compatible linear connections (we have no trace-less part) \cite{V6}, \cite{V9}, \cite{V10} and \cite{V11}. In the paper we are going to give explicit formulas for the linear connections with totally anti-symmetric torsion preserving the Finslerian length of tangent vectors in case of three-dimensional Finsler manifolds. The results are based on averaging of (intrinsic) Finslerian quantities by integration over the indicatrix surfaces. They imply some consequences for the base manifold as a Riemannian space with respect to the averaged Riemannian metric. The possible cases are Riemannian space forms of constant zero curvature, constant positive curvature or Riemannian spaces admitting Killing vector fields of constant Riemannian length. 

\section{Notations and terminology}

Let $M$ be a differentiable manifold with local coordinates $u^1, \ldots, u^n.$ The induced coordinate system of the tangent manifold $TM$ consists of the functions $x^1, \ldots, x^n$ and $y^1, \ldots, y^n$. For any $v\in T_pM$, $x^i(v):=u^i\circ \pi (v)=p$ and $y^i(v)=v(u^i)$, where $i=1, \ldots, n$ and $\pi \colon TM \to M$ is the canonical projection. 
\subsection{Finsler metrics} A Finsler metric is a continuous function $F\colon TM\to \mathbb{R}$ satisfying the following conditions:
\begin{itemize}
\item[(F1)] $\displaystyle{F}$ is smooth on the complement of the zero section (regularity),
\item[(F2)] $\displaystyle{F(tv)=tF(v)}$ for all $\displaystyle{t> 0}$ (positive homogenity),
\item[(F3)] the Hessian
$$g_{ij}=\frac{\partial^2 E}{\partial y^i \partial y^j}, \ \ \textrm{where} \ \ E=\frac{1}{2}F^2$$
is positive definite at all nonzero elements $\displaystyle{v\in T_pM}$ (strong convexity).
\end{itemize}

The so-called \emph{Riemann-Finsler metric} $g$ is constituted by the components $g_{ij}$. It is defined on the complement of the zero section because the second order partial differentiability of the energy function at the origin does not follow automatically: if $E$ is of class $C^2$ on the entire tangent manifold $TM$  then, by the positively homogenity of degree two, it follows that $E$ is quadratic on the tangent spaces, i.e. the space is Riemannian.
The Riemann-Finsler metric makes each tangent space (except at the origin) a Riemannian manifold with standard canonical objects such as the {\emph {volume form}} $\displaystyle{d\mu=\sqrt{\det g_{ij}}\ dy^1\wedge \ldots \wedge dy^n}$,
the \emph {Liouville vector field} $\displaystyle{C:=y^1\partial /\partial y^1 +\ldots +y^n\partial / \partial y^n}$ and the {\emph {induced volume form}}
$$\mu=\sqrt{\det g_{ij}}\ \sum_{i=1}^n (-1)^{i-1} \frac{y^i}{F} dy^1\wedge\ldots\wedge dy^{i-1}\wedge dy^{i+1}\ldots \wedge dy^n.$$
 on the indicatrix hypersurface $\displaystyle{\partial K_p:=F^{-1}(1)\cap T_pM\ \  (p\in M)}$. 

\subsection{Generalized Berwald manifolds}

\begin{Def} A linear connection $\nabla$ on the base manifold $M$ is called \emph{compatible} to the Finslerian metric if the parallel transports with respect to $\nabla$ preserve the Finslerian length of tangent vectors. Finsler manifolds admitting compatible linear connections are called generalized Berwald manifolds.
\end{Def}

Suppose that the parallel transports with respect to $\nabla$ (a linear connection on the base manifold) preserve the Finslerian length of tangent vectors and let $X_t$ be a parallel vector field along the curve $c\colon [0,1]\to M$. We have that
\begin{equation}
\label{paralleldiff}
(x^k\circ X_t)'={c^k}'\ \ \textrm{and}\ \ (y^k \circ X_t)'={X_t^k}'=-{c^i}'  X_t^j  \Gamma_{ij}^k\circ c
\end{equation}
because of the differential equation for parallel vector fields. If $F$ is the Finslerian fundamental function then
\begin{equation}
\label{eq:5}
(F \circ X_t)'=(x^k\circ X_t)'{\frac{\partial F}{\partial x^k}}\circ X_t+(y^k \circ X_t)'{\frac{\partial F}{\partial y^k}}\circ X_t
\end{equation}
and, by formula (\ref{paralleldiff}),
\begin{equation}
\label{eq:55}
(F \circ X_t)'={c^i}'\bigg(\frac{\partial F}{\partial x^i}-y^j {\Gamma}_{ij}^{k}\circ \pi \frac{\partial F}{\partial y^k}\bigg)\circ X_t.
\end{equation}
This means that the parallel transports with respect to $\nabla$ preserve the Finslerian length of tangent vectors (compatibility condition) if and only if
\begin{equation}
\label{cond1}
\frac{\partial F}{\partial x^i}-y^j {\Gamma}^k_{ij}\circ \pi \frac{\partial F}{\partial y^k}=0\ \  (i=1, \ldots,n),
\end{equation}
where the vector fields of type
\begin{equation}
\label{eq:6}
X_i^{h^{\nabla}}:=\frac{\partial}{\partial x^i}-y^j {\Gamma}^k_{ij}\circ \pi \frac{\partial}{\partial y^k}
\end{equation}
span the associated horizontal distribution belonging to $\nabla$.

\begin{Thm}
\label{heritage} \emph{\cite{V5}} If a linear connection on the base manifold is compatible to the Finslerian metric function then it must be metrical with respect to the averaged Riemannian metric
\begin{equation}
\label{averagemetric1}
\gamma_p (v,w):=\int_{\partial K_p} g(v, w)\, \mu=v^i w^j \int_{\partial K_p} g_{ij}\, \mu \ \ (v, w\in T_p M, p\in U).
\end{equation}
\end{Thm}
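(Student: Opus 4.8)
The plan is to prove the equivalent statement that the parallel transports of $\nabla$ are isometries of the averaged metric $\gamma$, since a linear connection is metrical precisely when its parallel transports preserve the metric in question. So I would fix a curve $c\colon[0,1]\to M$, write $P_t\colon T_{c(0)}M\to T_{c(t)}M$ for the parallel transport, and recall that the compatibility condition — the very content of the Definition, made explicit in (\ref{cond1}) — says that each $P_t$ preserves Finslerian length: $F\big(P_t u\big)=F(u)$ for all $u\in T_{c(0)}M$. The whole theorem then reduces to the purely linear-algebraic claim that an $F$-preserving linear isomorphism $P\colon(T_pM,F)\to(T_qM,F)$ satisfies $\gamma_q(Pv,Pw)=\gamma_p(v,w)$.

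The first step is to upgrade preservation of $F$ to preservation of the entire Riemann--Finsler structure. Since $E=\frac{1}{2}F^2$, the identity $E\circ P=E$ holds on $T_pM$; because $P$ is linear, differentiating this twice in the fibre directions kills the second derivative of $P$ and yields the intertwining of the metrics
$$g_{Pu}(PX,PY)=g_u(X,Y)\qquad (u\in T_pM\setminus\{0\},\ X,Y\in T_pM),$$
where the subscript records the point of the tangent space at which the Riemann--Finsler metric is evaluated. In particular $P$ carries the indicatrix $\partial K_p=F^{-1}(1)\cap T_pM$ diffeomorphically onto $\partial K_q$.

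The key step — and the one I expect to be the main obstacle — is to deduce from this that $P$ also pulls back the induced volume form, $P^{*}\mu=\mu$. Here one must exploit that $\mu$ is not an arbitrary $(n-1)$-form but the one canonically attached to $g$. Writing the ambient volume form $d\mu=\sqrt{\det g_{ij}}\,dy^1\wedge\cdots\wedge dy^n$, the intertwining of the metrics forces $P^{*}(d\mu)=d\mu$ (the Jacobian of $P$ and the transformation of $\sqrt{\det g_{ij}}$ cancel, up to the induced orientation), while linearity of $P$ makes it commute with the Liouville field, $P_{*}C=C$. Since on $\{F=1\}$ one has $\mu=\iota_C\,d\mu$, these two facts together give $P^{*}\mu=\mu$. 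This is the delicate point, because it couples the metric, the determinant density and the Liouville structure, and it is exactly what makes the \emph{averaged} metric — rather than some arbitrary fibre integral — transform correctly.

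Finally I would conclude by a change of variables $\eta=Pu$ in the defining integral (\ref{averagemetric1}): using $g_{Pu}(Pv,Pw)=g_u(v,w)$ together with $P^{*}\mu=\mu$,
$$\gamma_q(Pv,Pw)=\int_{\partial K_q} g(Pv,Pw)\,\mu=\int_{\partial K_p} g_u(v,w)\,\mu=\gamma_p(v,w).$$
Applying this with $P=P_t$ to the parallel fields $v(t)=P_t v_0$ and $w(t)=P_t w_0$ shows that $t\mapsto\gamma_{c(t)}\big(v(t),w(t)\big)$ is constant, which is precisely metricity of $\nabla$ with respect to $\gamma$; since $c$, $v_0$ and $w_0$ were arbitrary, the theorem follows.
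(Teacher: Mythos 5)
Your proof is correct: the reduction to showing that each parallel transport $P$ preserves $\gamma$, the upgrade from $E\circ P=E$ to $g_{Pu}(PX,PY)=g_u(X,Y)$ by two fibre differentiations using linearity of $P$, the resulting invariance of $d\mu$ and of $\mu=\iota_C\,d\mu$ on $\{F=1\}$ (with $\det P_t>0$ along a curve, so no orientation issue arises), and the final change of variables are all sound. The paper states Theorem \ref{heritage} without proof, citing \cite{V5}, and your argument is essentially the standard one behind that reference, so there is nothing to correct.
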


\section{Three-dimensional Finsler manifolds admitting compatible linear connections with totally anti-symmetric torsion tensor}

Suppose that $\nabla$ is a compatible linear connection of a three-dimensional generalized Berwald manifold. By Theorem \ref{heritage}, such a linear connection must be metrical with respect to the averaged Riemannian metric (\ref{averagemetric1}) given by integration of the Riemann-Finsler metric on the indicatrix hypersurfaces.
Therefore $\nabla$ is uniquely determined by its torsion tensor. Taking vector fields with pairwise vanishing Lie brackets on the neighbourhood $U$ of the base manifold, the Christoffel process implies that
$$X\gamma(Y,Z)+Y\gamma(X,Z)-Z\gamma(X,Y)=$$
$$2\gamma(\nabla_X Y, Z)+\gamma(X, T(Y,Z))+\gamma(Y, T(X,Z))-\gamma(Z, T(X,Y))$$
and, consequently, 
\begin{equation}
\label{Cproc}
\gamma(\nabla^*_X Y,Z)=\gamma(\nabla_X Y, Z)+\frac{1}{2}\left(\gamma(X, T(Y,Z))+\gamma(Y, T(X,Z))-\gamma(Z, T(X,Y))\right),
\end{equation}
where $\nabla^*$ denotes the L\'{e}vi-Civita connection of the averaged Riemannian metric. 
\begin{Def} The torsion tensor is totally anti-symmetric if its lowered tensor
$$T_{\flat}(X,Y,Z):=\gamma (T(X,Y),Z)$$
belongs to $\wedge^3 M$.
\end{Def}
\begin{Cor} If $\nabla$ is a metrical linear connection with totally anti-symmetric torsion then
\begin{equation}
\label{totanti}
\nabla^*_X Y=\nabla_X Y-\frac{1}{2}T(X,Y)
\end{equation}
and the geodesics of $\ \nabla^*$ and $\nabla$ coincide. 
\end{Cor}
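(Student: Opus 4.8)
The plan is to substitute the hypothesis of total anti-symmetry directly into the Christoffel-type identity (\ref{Cproc}) and collapse the three torsion terms into a single one. First I would rewrite each term on the right-hand side of (\ref{Cproc}) by means of the lowered torsion: since the averaged metric $\gamma$ is symmetric, one has $\gamma(X,T(Y,Z))=T_{\flat}(Y,Z,X)$, $\gamma(Y,T(X,Z))=T_{\flat}(X,Z,Y)$ and $\gamma(Z,T(X,Y))=T_{\flat}(X,Y,Z)$.

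The key step is to use that $T_{\flat}\in\wedge^3 M$ is totally anti-symmetric. The cyclic permutation $(X,Y,Z)\mapsto(Y,Z,X)$ is even, so $T_{\flat}(Y,Z,X)=T_{\flat}(X,Y,Z)$, whereas the transposition of the last two slots is odd, giving $T_{\flat}(X,Z,Y)=-T_{\flat}(X,Y,Z)$. Substituting these, the bracketed expression collapses:
$$T_{\flat}(Y,Z,X)+T_{\flat}(X,Z,Y)-T_{\flat}(X,Y,Z)=-T_{\flat}(X,Y,Z)=-\gamma(T(X,Y),Z),$$
so (\ref{Cproc}) reduces to $\gamma(\nabla^*_X Y,Z)=\gamma\big(\nabla_X Y-\frac{1}{2}T(X,Y),Z\big)$. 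As $\gamma$ is non-degenerate and $Z$ is arbitrary, cancelling $Z$ yields the stated formula (\ref{totanti}).

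For the coincidence of geodesics I would evaluate (\ref{totanti}) along an arbitrary curve $c$ with $X=Y=\dot{c}$. The anti-symmetry of $T$ forces $T(\dot{c},\dot{c})=0$, whence $\nabla^*_{\dot{c}}\dot{c}=\nabla_{\dot{c}}\dot{c}$ and the two geodesic equations are identical. I do not anticipate any genuine obstacle: the only point needing care is the bookkeeping of the permutation signs above, together with the elementary observation that the symmetry of $\gamma$ permits moving a vector into the final argument of $T_{\flat}$.
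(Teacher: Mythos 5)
Your argument is correct and is exactly the computation the paper leaves implicit: the Corollary is stated as an immediate consequence of the Christoffel identity (\ref{Cproc}), and substituting the total anti-symmetry of $T_{\flat}$ there collapses the three torsion terms to $-\gamma(T(X,Y),Z)$ just as you show, with the geodesic statement following from $T(\dot{c},\dot{c})=0$. Your sign bookkeeping for the permutations is right, so there is nothing to add.
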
 
If $\dim M=3$ then $\dim \wedge^3 M=1$ and, consequently, 
$$T_{\flat}\left(\frac{\partial}{\partial u^i}, \frac{\partial}{\partial u^j}, \frac{\partial}{\partial u^k}\right)=f \gamma \left(\frac{\partial}{\partial u^i} \times_{\gamma} \frac{\partial}{\partial u^j}, \frac{\partial}{\partial u^k}\right)=f\det \gamma_{ij}$$
for some local function $f\colon U\to \mathbb{R}$, where the orientation is choosen such that the coordinate vector fields represent a positive basis. This means that
\begin{equation}
\label{formulamain06}
\nabla^*_X Y=\nabla_X Y-\frac{f}{2}X \times_{\gamma} Y.
\end{equation}
Taking the Riemannian energy $\displaystyle{E^*(v):=\gamma(v,v)/2}$, the Riemann-Finsler metric is
$\displaystyle{g^*_{ij}=\gamma_{ij}\circ \pi}$ and the cross product of vertical vector fields is defined by 
$$g^* \left(\frac{\partial}{\partial y^i}\times_{g^*}\frac{\partial}{\partial y^j}, \frac{\partial}{\partial y^k}\right)=\det g^*_{ij}=\det \gamma_{ij}\circ \pi$$
with bilinear extension. Formula (\ref{formulamain06}) can be written in terms of the induced horizontal structures as follows. Since the horizontal distributions induced by $\nabla^*$ and $\nabla$ are spanned by the vector fields
$$X_i^{h^{*}}=\frac{\partial }{\partial x^i}-y^j {\Gamma^*}_{ij}^l\circ \pi\frac{\partial}{\partial y^l}\ \ \textrm{and}\ \ X_i^{h^{\nabla}}=\frac{\partial }{\partial x^i}-y^j {\Gamma}_{ij}^l\circ \pi\frac{\partial}{\partial y^l},$$
respectively, we have, by formula (\ref{formulamain06}), that
\begin{equation}
\label{comparison}
X_i^{h^*}=X_i^{h^{\nabla}}+f\circ \pi  V_i, \ \ \textrm{where}\ \ V_i=\frac{1}{2} \frac{\partial}{\partial y^i}\times_{g^*} C \ \ (i=1,2,3).
\end{equation}
\subsection{Three-dimensional Finsler manifolds admitting compatible linear connections with totally anti-symmetric torsion tensor} Let $M$ be a three-dimensional Finsler manifold admitting a compatible linear connection with totally anti-symmetric torsion tensor. U\-sing the comparison formula (\ref{comparison}), the compatibility condition (\ref{cond1}) gives that 
\begin{equation}
\label{key1}
X_i^{h^*}E=f\circ \pi V_i E \ \ (i=1, 2, 3) \ \ \Rightarrow \ \ VE=f\circ \pi \sum_{i=1}^3 (V_i E)^2,
\end{equation}
where the vector field $V$ is defined by the formula $\displaystyle{V:=\sum_{i=1}^3 (V_i E) X_i^{h^*}}$.

\begin{Lem}
\label{allorno}
If $\displaystyle{\sum_{i=1}^3 (V_i E)_v^2 = 0}$ for any $v\in T_pM$ then the Finslerian indicatrix $\partial K_p$ is a sphere with respect to the averaged Riemannian metric.
\end{Lem}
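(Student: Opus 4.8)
The plan is to show that the hypothesis forces the $g^*$-gradient of the Finslerian energy $E=\tfrac12 F^2$ to be radial at every point of $T_pM$, and then to compare it with the $g^*$-gradient of the Riemannian energy $E^*$, which turns out to be the Liouville field itself. First I would observe that $\sum_{i=1}^3 (V_iE)_v^2=0$ is a sum of squares of real numbers, so it is equivalent to $(V_iE)_v=0$ for $i=1,2,3$ at every $v\in T_pM$. Since each $V_i$ is a vertical vector field, $V_iE=g^*(\mathrm{grad}_{g^*}E, V_i)$, where $\mathrm{grad}_{g^*}E=g^{*kl}(\partial E/\partial y^l)\,\partial/\partial y^k$ denotes the vertical gradient; thus the hypothesis says that $\mathrm{grad}_{g^*}E$ is $g^*$-orthogonal to each $V_i$.

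Next I would identify the span of the $V_i$. At a nonzero $v$ we have $C_v=v$, so $V_i|_v=\tfrac12\,(\partial/\partial y^i)\times_{g^*}v$. The defining property of the cross product gives $g^*(u\times_{g^*}v,v)=0$ for every $u$, so all three vectors $V_i|_v$ lie in the two-dimensional $g^*$-orthogonal complement $v^{\perp}$. Moreover the linear map $u\mapsto u\times_{g^*}v$ has kernel $\mathrm{span}(v)$ and image exactly $v^{\perp}$, so applying it to the basis $\partial/\partial y^1,\partial/\partial y^2,\partial/\partial y^3$ shows that the $V_i|_v$ span $v^{\perp}$. Combined with the previous step, $\mathrm{grad}_{g^*}E$ is $g^*$-orthogonal to all of $v^{\perp}$, hence parallel to $v$; that is, the vertical $g^*$-gradient of $E$ is radial (a multiple of $C$) at every point.

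The key comparison is with the Riemannian energy. Since $g^*_{ij}=\gamma_{ij}\circ\pi$ and $E^*=\tfrac12\gamma_{ij}y^iy^j$, a direct computation yields $\mathrm{grad}_{g^*}E^*=y^i\,\partial/\partial y^i=C$. Hence $\mathrm{grad}_{g^*}E$ and $\mathrm{grad}_{g^*}E^*$ are everywhere parallel, so $E$ is constant along the connected $g^*$-spheres $\{E^*=\mathrm{const}\}$, whose tangent spaces are precisely the planes $v^{\perp}$. Consequently $E=h(E^*)$ for a single function $h$ on $T_pM\setminus\{0\}$. Finally, both $E$ and $E^*$ are positively homogeneous of degree two, and substituting $E(tv)=t^2E(v)$ and $E^*(tv)=t^2E^*(v)$ into $E=h(E^*)$ forces $h(t^2 s)=t^2 h(s)$, whence $h$ is linear and $E=cE^*$ for a constant $c$. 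Therefore $\partial K_p=\{E=\tfrac12\}$ is the $g^*$-sphere $\{E^*=\tfrac{1}{2c}\}$, as claimed.

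The main obstacle I anticipate is the spanning step: one must verify rigorously that the three vertical fields $V_i$ exhaust the orthogonal complement $v^{\perp}$ and treat the radial direction, where the cross products degenerate, with care, since this is exactly what converts the three scalar conditions $V_iE=0$ into the single geometric statement that $\mathrm{grad}_{g^*}E$ is radial. Once that is in place, the functional dependence $E=h(E^*)$ and the homogeneity argument upgrading $h$ to a linear function are routine.
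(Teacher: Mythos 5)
Your proof is correct and rests on the same key observation as the paper's: the map $u \mapsto u \times_{g^*} v$ has rank $2$ with image $v^{\perp}$ (the paper phrases this as the rank of the infinitesimal rotation matrix), so the vanishing of all three $V_iE$ forces the vertical $g^*$-gradient of $E$ --- equivalently the normal of its level surface --- to be radial. The only real difference is in the finishing move: the paper integrates $\tfrac{1}{2}\gamma(c,c)'(t)=g^*(C\circ c, c')=0$ along curves in the arcwise connected indicatrix to conclude the Euclidean norm is constant there, while you show $E$ is constant on the Euclidean spheres and use homogeneity to upgrade this to $E=cE^*$, which is a marginally stronger conclusion obtained by the same mechanism.
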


\begin{Pf}
Since the infinitesimal rotation represented by the matrix 
$$\left(\begin{matrix}
0 & -v^1 & \ \ v^2  \\
\ \ v^1 & 0 & -v^3\\
-v^2 & \ \ v^3 & 0
\end{matrix}\right)$$
is of rank $2$, $\displaystyle{\sum_{i=1}^3 (V_i E)_v^2 = 0}$ implies that the vector fields $V_1$, $V_2$ and $V_3$ span the tangent plane to the Finslerian indicatrix at any $v\in \partial K_p$. Therefore its Euclidean normal vector field (with respect to $g^{*}$) is proportional to $C$. Taking a curve $c\colon [0,1]\to \partial K_p$ we have 
$$0=g^*_{c(t)}\left(C\circ c(t), c'(t)\right)=\gamma_p(c(t), c'(t))=\frac{1}{2}\gamma(c,c)'(t),$$
i.e. the Euclidean norm of $c(t)$ is constant. Since the Finslerian indicatrix surface is arcwise connected this means that it is a sphere with respect to the averaged Riemannian metric.
\end{Pf}

\begin{Thm}
For a three-dimensional non-Riemannian Finsler manifold, the compatible linear connection with totally anti-symmetric torsion tensor must be of the form
\begin{equation}
\label{comparison01}
\nabla_X Y=\nabla^*_{X} Y+\frac{f}{2} X \times_{\gamma} Y,
\end{equation}
where $\nabla^*$ is the L\'{e}vi-Civita connection of the averaged Riemannian metric $\gamma$ and the function $f$ is given by
\begin{equation}
f(p)=\frac{1}{\sigma(p)}\int_{\partial K_p} VE\, \mu,
\end{equation}
where 
$$V=\sum_{i=1}^3 (V_i E) X_i^{h^*},\ \ \sigma(p)=\sum_{i=1}^3 \int_{\partial K_p} (V_i E)^2 \, \mu, \ \ V_i=\frac{1}{2} \frac{\partial}{\partial y^i}\times_{g^*} C. $$
\end{Thm}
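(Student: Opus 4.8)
The structural form (\ref{comparison01}) costs nothing: it is just the rearrangement of the already established formula (\ref{formulamain06}), i.e. $\nabla^*_X Y = \nabla_X Y - \frac{f}{2}X\times_{\gamma}Y$ solved for $\nabla_X Y$, where the scalar $f$ arises because $\dim M=3$ forces $\dim\wedge^3 M=1$ and the totally anti-symmetric lowered torsion is therefore a single functional multiple of the volume form. So the whole content of the statement is the explicit integral expression for $f$, and the plan is to extract it from the pointwise compatibility relation (\ref{key1}) by averaging over the indicatrix.

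Concretely, I would take (\ref{key1}) in the form $VE = f\circ\pi\sum_{i=1}^3 (V_iE)^2$, which holds at every nonzero tangent vector and in particular along each indicatrix $\partial K_p$. The crucial structural remark is that $f$ lives on the base manifold, so $f\circ\pi$ is \emph{constant} on each fibre $\partial K_p$, where it takes the value $f(p)$. Integrating both sides against the induced volume form $\mu$ and pulling this constant out of the fibre integral gives
$$\int_{\partial K_p} VE\,\mu = f(p)\sum_{i=1}^3\int_{\partial K_p}(V_iE)^2\,\mu = f(p)\,\sigma(p),$$
so that $f(p)=\sigma(p)^{-1}\int_{\partial K_p} VE\,\mu$ as soon as $\sigma(p)\neq 0$. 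This single integration is the entire computational step; no further manipulation of the Christoffel data (\ref{Cproc}) is needed.

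The one point requiring care — and the place where the hypothesis actually enters — is the legitimacy of dividing by $\sigma(p)$. Since each integrand $(V_iE)^2$ is non-negative, the vanishing $\sigma(p)=0$ can only occur if $\sum_{i=1}^3 (V_iE)_v^2=0$ for every $v\in\partial K_p$, and hence, by homogeneity, for every $v\in T_pM$. That is exactly the hypothesis of Lemma \ref{allorno}, whose conclusion is that $\partial K_p$ is a sphere with respect to the averaged Riemannian metric $\gamma$, i.e. the Finsler structure is Riemannian at $p$. This contradicts the standing assumption that $M$ is non-Riemannian. I therefore expect the main (and only genuine) obstacle to be this non-degeneracy argument; once Lemma \ref{allorno} rules out $\sigma(p)=0$, the division is justified, the displayed formula for $f$ follows, and combined with (\ref{comparison01}) this determines $\nabla$ uniquely.
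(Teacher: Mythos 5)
Your proposal follows the paper's own argument almost verbatim: formula (\ref{comparison01}) is read off from (\ref{formulamain06}), the integral expression for $f$ comes from integrating the fibrewise identity (\ref{key1}) over $\partial K_p$ and pulling out the constant $f(p)$, and the division by $\sigma(p)$ is justified by ruling out $\sigma(p)=0$ via Lemma \ref{allorno}. All of this is correct and is exactly the route taken in the paper.

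There is, however, one small but genuine gap at the end. From $\sigma(p)=0$ you correctly deduce, via non-negativity and Lemma \ref{allorno}, that the indicatrix $\partial K_p$ at the \emph{single} point $p$ is a sphere with respect to $\gamma_p$. You then assert that this means ``the Finsler structure is Riemannian at $p$'' and that this contradicts the hypothesis that $M$ is non-Riemannian. But non-Riemannian is a global statement about the manifold: a priori the norm on $T_pM$ could be Euclidean while the norms on other tangent spaces are not, and no contradiction would arise. The missing step --- which the paper supplies explicitly --- is that on a generalized Berwald manifold the parallel transports of the compatible connection are \emph{linear} isomorphisms carrying indicatrices to indicatrices; since the linear image of a quadric is a quadric, the indicatrix at every point of the (connected) manifold is then an ellipsoid, and hence the whole manifold is Riemannian, contradicting the hypothesis. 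With that one sentence added, your argument is complete and coincides with the paper's proof.
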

\begin{Pf}
Since $\displaystyle{VE=f\circ \pi \sum_{i=1}^3 (V_i E)^2}$ it follows that
\begin{equation}
\label{bubu}
\int_{\partial K_p} VE \, \mu=f(p)\sum_{i=1}^3 \int_{\partial K_p} (V_i E)^2\, \mu
\end{equation}
If the integrand on the right hand side is zero then, by Lemma \ref{allorno}, $\partial K_p$ is a Euclidean sphere in $T_pM$ with respect to $\gamma$. In case of a generalized Berwald manifolds we have linear parallel transports between the tangent spaces. Since the translates of a quadratic surface are quadratic, this means that the manifold is Riemannian. Otherwise we can divide equation (\ref{bubu}) to express the function $f$.
\end{Pf}

\section{Curvature properties} 

Let a point $p\in M$ be fixed and consider the subgroup $G$ of orthogonal transformations with respect to the averaged inner product leaving the indicatrix $\partial K_p$ invariant in $T_pM$. Such a group is obviously closed in $O(3)$ and, consequently, it is compact. If we have a generalized Berwald manifold then the group $G$ is essentially independent of the choice of $p$ because the parallel translations with respect to the compatible linear connection $\nabla$ makes them isomorphic provided that the manifold is connected. On the other hand $G$  must be finite or reducible unless the manifold is Riemannian; see \cite[Remark 5]{V12}. According to Theorem \ref{heritage} it follows that $\textrm{Hol} \ \nabla\subset G$, i.e. the holonomy group of a compatible linear connection is finite or reducible in case of a a non-Riemannian generalized Berwald manifold. Using vector fields $X$, $Y$ and $Z$ with pairwise vanishing Lie-brackets on a neighbourhood $U\subset M$, the comparison formula (\ref{comparison01}) says that
\begin{equation}
\label{curv}
R(X,Y)Z=R^*(X,Y)Z+\frac{1}{2} \left((Xf)Y-(Yf)X\right)\times_{\gamma} Z+\frac{f^2}{4} \left(X\times_{\gamma} Y\right)\times_{\gamma} Z
\end{equation}
because of the Jacobi identity
$$\left(X\times_{\gamma} Y\right)\times_{\gamma} Z+\left(Z\times_{\gamma} X\right)\times_{\gamma} Y+\left(Y\times_{\gamma} Z\right)\times_{\gamma} X=0$$
and the product rule
$$\nabla_X^* (Y\times_\gamma Z)=\left(\nabla_X^* Y\right)\times_\gamma Z+Y\times_{\gamma} \left(\nabla_X^* Z\right).$$
Using the vector triple product extension formula
$$X\times_{\gamma} \left(Y\times_{\gamma} Z\right)=\gamma(X,Z)Y-\gamma(X,Y)Z$$
it follows that
\begin{equation}
\label{sect}
\gamma(R(X,Y)Y,X)=\gamma(R^*(X,Y)Y,X)-\frac{f^2}{4} \det \left(\begin{matrix}
\gamma(X,X) & \gamma(X,Y)\\
\gamma(X,Y) & \gamma(Y,Y)\end{matrix}\right).
\end{equation}

The curvature tensor of $\nabla$ obviously satisfies the curvature property
\begin{equation}
\label{curv1}
R(X,Y)Z=-R(Y,X)Z.
\end{equation}
Property
\begin{equation}
\label{curv2}
\gamma(R(X,Y)Z,W)=-\gamma(R(X,Y)W,Z)
\end{equation}
also holds because $\nabla$ is metrical with respect to the averaged Riemannian metric $\gamma$. We are going to investigate the Jacobi identity and the block symmetry.

\begin{Lem} 
\label{ccp}
The curvature tensor of $\nabla$ satisfies the Jacobi identity
\begin{equation}
\label{curv3}
R(X,Y)Z+R(Z,X)Y+R(Y,Z)X=0
\end{equation}
if and only if the function $f$ is constant.
\end{Lem}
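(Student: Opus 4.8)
The plan is to feed the comparison formula (\ref{curv}) into the cyclic sum $R(X,Y)Z+R(Z,X)Y+R(Y,Z)X$ and to treat its three groups of terms separately; throughout I would keep $X,Y,Z$ pairwise commuting (e.g. coordinate vector fields), exactly as in the derivation of (\ref{curv}), so that the formula applies without correction terms. The purely Riemannian part produces $R^*(X,Y)Z+R^*(Z,X)Y+R^*(Y,Z)X$, which vanishes since $\nabla^*$ is the torsion-free L\'{e}vi-Civita connection of $\gamma$ and hence satisfies the classical first Bianchi identity. The quadratic part produces $\frac{f^2}{4}$ times the cyclic sum of $(X\times_\gamma Y)\times_\gamma Z$, which is zero by precisely the Jacobi identity for $\times_\gamma$ quoted above. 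Thus only the first-order part can obstruct (\ref{curv3}).

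Next I would compute the cyclic sum of the first-order term $\frac{1}{2}\big((Xf)Y-(Yf)X\big)\times_\gamma Z$. Expanding by bilinearity and repeatedly using the antisymmetry $A\times_\gamma B=-B\times_\gamma A$, the six resulting terms pair up and collapse to $S:=(Xf)\,(Y\times_\gamma Z)+(Yf)\,(Z\times_\gamma X)+(Zf)\,(X\times_\gamma Y)$. Consequently $R(X,Y)Z+R(Z,X)Y+R(Y,Z)X=S$, and the whole question becomes: for which $f$ does $S$ vanish identically?

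Finally I would identify $S$ with the standard three-dimensional expansion $S=\gamma(X,Y\times_\gamma Z)\,\textrm{grad}_\gamma f$, where $\textrm{grad}_\gamma f$ is the $\gamma$-gradient characterized by $\gamma(\textrm{grad}_\gamma f,W)=Wf$. The cleanest check is to note that both sides are tensorial in $X,Y,Z$ (the directional derivatives enter $C^\infty$-linearly), so it suffices to evaluate at a positively oriented $\gamma$-orthonormal frame, where the three cross products are exactly the frame vectors and $S$ reduces to the gradient written in that frame. Since the coefficient $\gamma(X,Y\times_\gamma Z)$ attains nonzero values, $S\equiv0$ for all $X,Y,Z$ if and only if $\textrm{grad}_\gamma f=0$, i.e. $df=0$, i.e. $f$ is constant on the connected manifold; conversely, if $f$ is constant the first-order term disappears and the two cancellations above already give (\ref{curv3}). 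The only real obstacle is the bookkeeping in the cross-product algebra of the middle term, together with the (routine but necessary) verification that the surviving expression $S$ is genuinely tensorial, which is what legitimizes testing it on a single convenient orthonormal frame.
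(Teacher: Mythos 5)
Your proposal is correct and follows essentially the same route as the paper: both reduce the cyclic sum via (\ref{curv}) to the first-order remainder $(Xf)\,Y\times_\gamma Z+(Yf)\,Z\times_\gamma X+(Zf)\,X\times_\gamma Y$, the $R^*$ part dying by the first Bianchi identity and the $f^2$ part by the Jacobi identity for $\times_\gamma$. The only cosmetic difference is the final step, where the paper contracts with $Z$ to isolate $(Zf)\,\gamma(X\times_\gamma Y,Z)$ while you identify the remainder with $\gamma(X,Y\times_\gamma Z)\,\mathrm{grad}_\gamma f$; both conclude $df=0$, i.e.\ $f$ is constant.
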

\begin{Pf}
As a straightforward calculation shows 
\begin{equation}
\label{jacobi}
R(X,Y)Z+R(Z,X)Y+R(Y,Z)X\stackrel{(\ref{curv})}{=}(Xf) Y\times_\gamma Z+(Zf) X\times_\gamma Y+(Yf) Z\times_\gamma X.
\end{equation}
Taking the inner product of both sides with $Z$ (for example) it can be easily seen that the left hand side is zero if and only if $Zf=0$ for any vector field $Z$ on the base manifold. 
\end{Pf}

\begin{Rem} \emph{To complete the list of the classical curvature properties we need to investigate the so-called block-symmetry
\begin{equation}
\label{curv4}
\gamma(R(X,Y)Z,W)=\gamma(R(Z,W)X,Y);
\end{equation}
especially, if the dimension is $3$, then properties (\ref{curv3}) and (\ref{curv4}) are equivalent to each other for any curvature tensor satisfying (\ref{curv2}). It follows by a pure algebraic way \cite[Remark 3.11]{V11}.}
\end{Rem}

\subsection{The case of finite holonomy group} Suppose that $G$ is finite and, consequently, the holonomy group of the compatible linear connection is also finite, i.e. its curvature is zero. 

\begin{Thm} If $M$ is a connected three-dimensional non-Riemannian Finsler manifold admitting a compatible flat linear connection with totally anti-symmetric torsion tensor then 
\begin{itemize}
\item $M$ is a classical Berwald manifold of constant zero sectional curvature with respect to the averaged Riemannian metric or
\item $M$ is a proper generalized Berwald manifold of constant positive sectional curvature with respect to the averaged Riemannian metric.
\end{itemize}
\end{Thm}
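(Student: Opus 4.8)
The plan is to let the single scalar function $f$ carry all the information, using flatness to pin down $f$ through Lemma \ref{ccp} and then feeding the outcome into the sectional-curvature identity (\ref{sect}). First I would invoke the hypothesis that $\nabla$ is flat, i.e. $R\equiv 0$. Then the Jacobi-type identity (\ref{curv3}) holds trivially, so Lemma \ref{ccp} applies and yields $Zf=0$ for every vector field $Z$; since $M$ is connected this forces $f$ to be a global constant, equivalently $\mathrm{d}f=0$.

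Next I would substitute $R\equiv 0$ together with $\mathrm{d}f=0$ into the comparison. Because $f$ is constant, the term of (\ref{curv}) carrying $Xf$ and $Yf$ disappears, and the sectional identity (\ref{sect}) collapses to
$$0=\gamma(R^*(X,Y)Y,X)-\frac{f^2}{4}\det\left(\begin{matrix}\gamma(X,X) & \gamma(X,Y)\\ \gamma(X,Y) & \gamma(Y,Y)\end{matrix}\right)$$
for all vector fields $X,Y$. Dividing by the (strictly positive) Gram determinant on an arbitrary tangent $2$-plane shows that the sectional curvature of the averaged Riemannian metric $\gamma$ is everywhere equal to the constant $f^2/4\ge 0$. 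Hence $(M,\gamma)$ is a Riemannian space of constant sectional curvature $f^2/4$.

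Then I would split according to the value of $f$. If $f=0$, then by the comparison formula (\ref{comparison01}) the torsion $T(X,Y)=f\,X\times_{\gamma}Y$ vanishes, so $\nabla=\nabla^*$ is torsion-free, $M$ is a classical Berwald manifold, and the constant sectional curvature $f^2/4$ equals zero; this is the first alternative. If $f\neq 0$, then $T$ is a nonzero totally anti-symmetric torsion (the cross product is non-degenerate), so $\nabla$ is not torsion-free, and the constant sectional curvature $f^2/4$ is strictly positive, giving the second alternative.

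The point requiring genuine care is that in the second case $M$ is a \emph{proper} generalized Berwald manifold, i.e. not itself a classical Berwald manifold; the mere presence of torsion in $\nabla$ does not by itself exclude the existence of a separate torsion-free compatible connection. To settle this I would argue by contradiction: were $M$ Berwald, the torsion-free compatible connection would, by Theorem \ref{heritage}, be metrical with respect to $\gamma$ and hence coincide with the Lévi-Civita connection $\nabla^*$, whose holonomy would then be contained in the group $G$ that is finite or reducible for a non-Riemannian manifold. But on a three-dimensional space of constant positive curvature the restricted holonomy of $\nabla^*$ is the full $SO(3)$, which is neither finite nor reducible. This contradiction shows $\nabla^*$ is not compatible, so $M$ is a proper generalized Berwald manifold, completing the dichotomy.
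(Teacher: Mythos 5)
Your proof follows the paper's argument exactly: flatness makes the identity (\ref{curv3}) trivially true, so Lemma \ref{ccp} forces $f$ to be constant, and formula (\ref{sect}) with $R\equiv 0$ gives constant sectional curvature $f^2/4$ for $\gamma$, yielding the two cases according to whether $f$ vanishes. Your additional holonomy argument for why the second case gives a \emph{proper} generalized Berwald manifold (the Levi--Civita connection of a space of constant positive curvature has irreducible, non-finite holonomy, so it cannot be a compatible connection of a non-Riemannian Finsler metric) is correct and supplies a detail the paper leaves implicit.
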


\begin{Pf}
If we have a compatible linear connection with zero curvature then the classical curvature properties (\ref{curv1}) - (\ref{curv4}) are all satisfied and $f$ must be a constant function due to Lemma \ref{ccp}. The result comes from the comparison formula (\ref{sect}): if the (constant) function $f$ is identically zero then we have a Riemannian space form of constant zero curvature. Otherwise it is of constant positive curvature. 
\end{Pf}

\begin{Rem}
{\emph{If $M$ is complete then, by the Killing-Hopf theorem of Riemannian geometry, it follows that the universal cover of $M$ (as a Riemannian space with respect to the averaged Riemannian metric) is isometric to $\mathbb{R}^3$ or the Euclidean unit sphere $S^3\subset \mathbb{R}^4$. Otherwise the manifold (as a non-Riemannian Finsler space) does not admit a compatible flat linear connection with totally anti-symmetric torsion tensor.}}
\end{Rem}

\subsection{The case of non-finite reducible holonomy group} 

\begin{Thm}
\label{thm:4} If $M$ is a connected three-dimensional non-Riemannian Finsler manifold admitting a compatible non-flat linear connection $\nabla$ with totally anti-symmetric torsion tensor then there exists a one-dimensional distribution $\mathcal{D}$ such that
\begin{itemize}
\item any local section of constant length is a covariant constant vector field with respect to $\nabla$,
\item any local section of constant length is a Killing vector field of constant length with respect to the averaged Riemannian metric. 
\end{itemize}
\end{Thm}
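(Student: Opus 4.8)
The plan is to produce the distribution $\mathcal{D}$ as the parallel line field determined by the reducible holonomy group, and then to deduce both properties from the metricity of $\nabla$ together with the comparison formula (\ref{comparison01}). First I would assemble the structural input already collected in this section. Since $M$ is non-Riemannian, the group $G$ is finite or reducible by \cite[Remark 5]{V12}, and a finite $G$ would make the holonomy group finite and hence $\nabla$ flat; as $\nabla$ is assumed non-flat, $G$ must be non-finite and reducible. Because $\textrm{Hol}\ \nabla\subset G$, every $G$-invariant subspace of $T_pM$ is holonomy-invariant, so $\textrm{Hol}_p\ \nabla$ admits a proper invariant subspace. Metricity gives $\textrm{Hol}_p\ \nabla\subset O(3)$, so the orthogonal complement of an invariant subspace is again invariant; passing to the complement if necessary I obtain a one-dimensional invariant line $\ell\subset T_pM$.

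Next I would globalize $\ell$ by the holonomy principle: parallel transport of $\ell$ is path-independent up to the holonomy action, which fixes $\ell$, so on the connected manifold $M$ this defines a parallel one-dimensional distribution $\mathcal{D}$. Parallelism means that any local section $\xi$ spanning $\mathcal{D}$ satisfies $\nabla_X\xi=\omega(X)\,\xi$ for some one-form $\omega$. For the first bullet I would then take $\xi$ of constant nonzero length and differentiate: metricity gives $0=X\gamma(\xi,\xi)=2\gamma(\nabla_X\xi,\xi)=2\,\omega(X)\,\gamma(\xi,\xi)$, and since $\gamma(\xi,\xi)$ is a nonzero constant this forces $\omega=0$, that is $\nabla\xi=0$.

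For the second bullet I would feed $\nabla_X\xi=0$ into (\ref{comparison01}) to get $\nabla^*_X\xi=-\frac{f}{2}\,X\times_{\gamma}\xi$, and then test the Killing condition $(\mathcal{L}_\xi\gamma)(X,Y)=\gamma(\nabla^*_X\xi,Y)+\gamma(\nabla^*_Y\xi,X)$. The two resulting terms are the scalar triple products $-\frac{f}{2}\gamma(X\times_{\gamma}\xi,Y)$ and $-\frac{f}{2}\gamma(Y\times_{\gamma}\xi,X)$, which differ by the transposition of their first and third slots and hence cancel by the total anti-symmetry of the triple product; thus $\xi$ is Killing, of constant length by hypothesis. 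I expect the genuine obstacle to be entirely in the first step, namely turning the reducibility of $G$ into a globally defined parallel line field through the holonomy theorem and the connectedness of $M$. Once $\mathcal{D}$ is in hand both bullets are immediate, and notably neither computation requires $f$ to be constant, which is exactly what makes the non-flat case go through.
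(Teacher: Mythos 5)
Your proposal is correct and follows essentially the same route as the paper: both obtain the line field from the one-dimensional invariant subspace of the reducible group $G$ (via \cite[Remark 5]{V12} and $\mathrm{Hol}\,\nabla\subset G$), globalize it by parallel transport, and verify the Killing property by feeding $\nabla\xi=0$ into (\ref{comparison01}) and cancelling the two triple-product terms by anti-symmetry. Your treatment of the first bullet is marginally more complete in that you show \emph{every} constant-length section is covariant constant (via $\nabla_X\xi=\omega(X)\xi$ and metricity) rather than constructing one by parallel transport, but this is a presentational difference, not a different argument.
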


\begin{proof} Suppose that $R_p(X,Y)\neq 0$ for some point $p\in M$, i.e. the holonomy group of the compatible linear connection contains a one-parameter subgroup of orthogonal transformations with respect to the averaged Riemannian metric leaving the indicatrix $\partial K_p$ invariant. Since $M$ is a non-Riemannian generalized Berwald manifold it follows that $G$ containing all orthogonal transfromations leaving the indicatrix $\partial K_p$ invariant can not be transitive or finite because $G\supset \textrm{Hol} \ \nabla$. Therefore it is a reducible group containing a one-parameter subgroup of rotations; see \cite[Remark 5]{V12}. The distribution is constituted by the one-dimensional invariant subspace of $G$ as the point of the manifold is varying. The local sections of constant length must be constructed as follows: if $\beta_p$ generates the one-dimensional invariant subspace (the common axis of the rotations) of $G$ in $T_pM$ then, by parallel transports with respect to $\nabla$, we can extend it to a local covariant constant section $\beta$, i.e. $\displaystyle{\nabla_{X_q} \beta =0}$ for any vector field $X$ on $M$ and $q\in U$, where $U$ is a local neighbourhood around $p\in M$. Since $\displaystyle{\nabla_X \beta=0}$ it follows that
$$\nabla^*_X \beta\stackrel{(\ref{comparison01})}{=}-\frac{f}{2} X \times_{\gamma} \beta$$
and, consequently,
$$\left(\mathcal{L}_{\beta} \gamma\right)(X,Y)=\beta \gamma(X,Y)-\gamma([\beta, X],Y)-\gamma([\beta,Y],X),$$
where
$$[\beta,X]=\nabla^*_{\beta}X-\nabla^*_X \beta=\nabla^*_{\beta}X+\frac{f}{2} X \times_{\gamma} \beta=\nabla^*_{\beta}X -\frac{f}{2} \beta \times_{\gamma} X.$$
Therefore
$$\left(\mathcal{L}_{\beta} \gamma\right)(X,Y)=\beta \gamma(X,Y)-\gamma(\nabla^*_{\beta}X,Y)-\gamma(\nabla^*_{\beta}Y,X)+\frac{f}{2}\gamma (\beta \times_{\gamma} X,Y)+\frac{f}{2}\gamma (\beta \times_{\gamma} Y,X)=$$
$$\beta \gamma(X,Y)-\gamma(\nabla^*_{\beta}X,Y)-\gamma(\nabla^*_{\beta}Y,X)=0$$
as was to be proved. 
\end{proof}

The following example can be considered as a converse of the global version of Theorem \ref{thm:4}.

\subsection{An example} Suppose that $M$ is a connected Riemannian manifold admitting a Killing vector field $\beta$ of unit length. An easy direct computation shows that 
$$\left(\mathcal{L}_{\beta} \gamma\right)(X,Y)=\gamma(\nabla^*_X \beta,Y)+\gamma(\nabla^*_Y \beta ,X)=0,$$
i.e. the Hesse form $\gamma(\nabla^*_X \beta,Y)$ is anti-symmetric. On the other hand $\gamma(\nabla^*_X \beta,\beta)=0$ because $\beta$ is of constant length. Therefore
$$\gamma(\nabla^*_X \beta,Y)=-\frac{f}{2} \gamma(\beta \times_{\gamma} X, Y)$$
for some function $f$, i.e.
$$\nabla^*_X \beta=-\frac{f}{2} \beta \times_{\gamma} X.$$
Taking the metric connection $\nabla$ with torsion $T(X,Y)=f X\times_{\gamma} Y$ it follows, by the comparison formula (\ref{comparison01}), that $\beta$ is a covariant constant vecor field with respect to $\nabla$. This means that for any vector fields $X$ and $Y$
$$R(X,Y)\beta=0, \ \ \textrm{i.e.}  \ \ R(X,Y)Z=r(X,Y) \beta\times_{\gamma} Z$$
for some anti-symmetric scalar-valued form $r\in \wedge^2 M$  because $\beta$ is the vector invariant of the anti-symmetric mapping $\displaystyle{Z \mapsto R(X,Y)Z}$. By the Ambrose-Singer theorem, the unit component of the holonomy group (the so-called restricted holonomy group) of $\nabla$ is the one-parameter rotational group generated by $\beta$ at each point of the manifold. Taking a point $p\in M$ let us choose a non-qadratic convex revolution surface around the axis of $\beta_p$. For an explicite example consider a trifocal ellipsoid (it is a kind of generalized conics instead of the classical conics of the Riemannian geometry) body defined by the equation
\begin{equation}
\label{trifocal}
\|v+\beta_p\|+ \|v\|+\|v-\beta_p\|\leq \textrm{const.};
\end{equation}
the focal set consists of $\pm \beta_p$, ${\bf 0}$ and the constant is large enough to contain the focal points in the interior of the body to avoid singularities. Using parallel transports with respect to $\nabla$ we can extend (\ref{trifocal}) to each point of the manifold. Note that $\pm \beta_p$ in the focal set provide that (\ref{trifocal}) is invariant under not only the restricted holonomy group but the entire one including possibly reflections about the two-dimensional invariant subspace. Such a smoothly varying family of convex bodies induces a (non-Riemannian) fundamental function $F$ such that it is invariant under the parallel transport with respect to $\nabla$.

\begin{Rem}\emph{Killing vector fields of constant length naturally appear in different geometric constructions such as $K$-contact and Sasakian manifolds \cite{Blair}, \cite{BMS} and \cite{BG}. There are many restrictions to the existence of Killing vector fields of constant length on a Riemannian manifold; for a comprehensive survey see \cite{BN}: for example, if a compact Riemannian manifold admits such a vector field then its Euler characteristic must be zero in the sense of a Theorem due to H. Hopf \cite[Section 1]{BN}.}
\end{Rem}

\end{document}